\def\EquationsBySection{\def\theequation
	{\thesection.\arabic{equation}}%
	\@addtoreset{equation}{section}}
\newtheorem{remark}{Remark}[section]
\newtheorem{theorem}[remark]{Theorem}
\newtheorem{lemma}[remark]{Lemma}
\newcommand\old[1]{}
\title[A Direct Method of Moving Planes for Logarithmic Schr\"odinger Operator] 
      {A Direct Method of Moving Planes for Logarithmic Schr\"odinger Operator}
       \author{Rong Zhang, Vishvesh Kumar and Michael Ruzhansky}
\address[ Rong Zhang]{HLM, Academy of Mathematics and Systems Science,\\
Chinese Academy of Sciences, Beijing 100190, P. R. China \newline and \newline Department of Mathematics: Analysis, Logic and Discrete Mathematics, Ghent University, Ghent, Belgium}
\email{zhangrong@nnu.edu.cn}
\address[Vishvesh Kumar]{Department of Mathematics: Analysis, Logic and Discrete Mathematics, Ghent University, Ghent, Belgium}
\email{vishveshmishra@gmail.com / vishvesh.kumar@ugent.be}
\address[Michael Ruzhansky]{Department of Mathematics: Analysis, Logic and Discrete Mathematics, Ghent University, Ghent, Belgium\newline and \newline
School of Mathematical Sciences, Queen Mary University of London, United Kingdom}
\email{michael.ruzhansky@ugent.be}
\keywords{Logarithmic Schr$\ddot{\text{o}}$dinger operator; Symmetry and monotonicity; The direct method of moving planes; Logarithmic symbol.}
\subjclass{35R11, 35B06, 35B50, 35B51, 35D30.}
\begin{document}
\maketitle
\begin{abstract} 
In this paper, we study the radial symmetry and monotonicity of nonnegative solutions to nonlinear equations involving the logarithmic Schr$\ddot{\text{o}}$dinger operator $(\mathcal{I}-\Delta)^{\log}$ corresponding
to the logarithmic symbol $\log(1 + |\xi|^2)$, which is a singular integral operator given by
$$(\mathcal{I}-\Delta)^{\log}u(x)
=c_{N}P.V.\int_{\mathbb{R}^{N}}\frac{u(x)-u(y)}{|x-y|^{N}}\kappa(|x-y|)dy,$$
where $c_{N}=\pi^{-\frac{N}{2}}$, $\kappa(r)=2^{1-\frac{N}{2}}r^{\frac{N}{2}}\mathcal{K}_{\frac{N}{2}}(r)$
and $\mathcal{K}_{\nu}$ is the modified Bessel function of the second
kind with index $\nu$. The proof hinges on a direct method of moving planes for the logarithmic Schr$\ddot{\text{o}}$dinger operator.

\end{abstract}




\section{Introduction}

The study of Schr\"odinger equations received a great deal of attention from researchers in the past decades because of its vast applications in several areas of mathematics and mathematical physics. 
In particular, Schr\"odinger equations arise in quantum field theory and in the Hartree-Fock theory (see \cite{r1,r2,r3,r4}).
Recently, there is a surge of interest to investigate integrodifferential operators of order close to zero and associated linear and nonlinear integrodifferential equations (see \cite{8,9,30,20,24,27}).  In particular, the logarithmic Laplacian and the logarithmic Schr$\ddot{\text{o}}$dinger operator are two interesting examples of such a class of operators. The logarithmic Laplacian was first introduced by Chen and Weth in \cite{8} as a limit of fractional
 Laplacian (see also \cite{chy2,chy3} for the spectral properties of the logarithmic Laplacian).   The logarithmic Schr$\ddot{\text{o}}$dinger operator $(\mathcal{I}-\Delta)^{\log}$ (see \cite{aml2}) and the logarithmic Laplacian $L_{\Delta}$ (see \cite{8, Fra1,teu,rrr}) have the similar behavior locally concerning to the singularity of kernels but  
the logarithmic Schr$\ddot{\text{o}}$dinger operator eliminates the
integrability problem of the logarithmic Laplacian at infinity. To define the logarithmic Schr$\ddot{\text{o}}$dinger operator, let us begin with the following observation
\begin{equation}\label{a2}
\lim_{s\rightarrow0^{+}}(\mathcal{I}-\Delta)^{s}u(x)=u(x),\ \text{for}\ u\in C^{2}(\mathbb{R}^{N} ),
\end{equation}
where for $s\in(0,1)$, the operator $(\mathcal{I}-\Delta)^{s}$ stands for the relativistic Schr$\ddot{\text{o}}$dinger operator, for sufficiently regular function $u:\mathbb{R}^{N}\rightarrow\mathbb{R}$,  which can be represented via hypersingular integral \eqref{a2} (see \cite{10}),
\begin{equation}\label{a3}
(\mathcal{I}-\Delta)^{s}u(x)=u(x)+c_{N,s}\lim_{\epsilon\rightarrow0^{+}}\int_{\mathbb{R}^{N}\setminus B_{\epsilon}(0)}\frac{u(x)-u(y)}{|x-y|^{N+2s}}\varpi_{s}(|x-y|)dy,
\end{equation}
where $c_{N,s}=\frac{\pi^{-\frac{N}{2}}4^{s}}{\Gamma(-s)}$ is a normalization constant and the function $\varpi_{s}$ is given by
\begin{equation}\label{a4}
\begin{aligned}
\varpi_{s}(r)&=2^{1-\frac{N+2s}{2}}r^{\frac{N+2s}{2}}\mathcal{K}_{\frac{N+2s}{2}}(r)=\int_{0}^{+\infty}t^{-1+\frac{N+2s}{2}}e^{-t-\frac{r^{2}}{4t}}dt.
\end{aligned}
\end{equation}

Furthermore, if $u\in C^{2}(\mathbb{R}^{N})$, then $(\mathcal{I}-\Delta)^{s}u(x)$ is well defined by \eqref{a3} for every $x\in\mathbb{R}^{N}$. Here
the function $\mathcal{K}_{\nu}$ is the modified Bessel function of the second kind with index $\nu>0$ and it is
given by
$$\mathcal{K}_{\nu} (r)=\frac{(\frac{\pi}{2})^{\frac{1}{2}}r^{\nu}e^{-r}}{\Gamma(\frac{2\nu+1}{2})}
\int_{0}^{\infty}\big(1+\frac{t}{2}\big)^{\nu-\frac{1}{2}}e^{-rt}t^{\nu-\frac{1}{2}}dt,$$
for more properties of $\mathcal{K}_{\nu}$, see e.g. \cite{aml2, g1,g3,g2,guo} and the references
therein.

It is well known that  $\mathcal{K}_{\nu}$ is a real and positive function satisfying 
\begin{equation}\label{kmu}
\mathcal{K}'_{\nu}(r)=-\frac{\nu}{r}\mathcal{K}_{\nu}(r)-\mathcal{K}_{\nu-1}(r)<0,
\end{equation}
for all $r>0$, $\mathcal{K}_{\nu}=\mathcal{K}_{-\nu}$ for $\nu>0$. Furthermore, for $\nu>0$ (see \cite{g2,guo})
\begin{equation}\label{kk}
\mathcal{K}_{\nu}(r)\sim
\begin{cases}
\frac{\Gamma(\nu)}{2}(\frac{r}{2})^{\nu},& r\rightarrow0,\\[2mm]
\frac{\sqrt{\pi}}{\sqrt{2}}r^{-\frac{1}{2}}e^{-r},& r\rightarrow\infty.
\end{cases}
\end{equation}


It follows from \eqref{a2} that one may expect a Taylor expansion with respect to parameter $s$ of the operator $(\mathcal{I}-\Delta)^{s}$ near zero for $u\in C^{2}(\mathbb{R}^{N})$ and $x\in\mathbb{R}^{N}$ as
\begin{equation}\label{a5}
\begin{aligned}
(\mathcal{I}-\Delta)^{s}u(x)=u(x)+s(\mathcal{I}-\Delta)^{\log}u(x)+o(s),\quad \text{as}\ s\rightarrow0^{+}.
\end{aligned}
\end{equation}
The logarithmic Schr$\ddot{\text{o}}$dinger operator $(\mathcal{I}-\Delta)^{\log}$ appears as the first order term in the
above expansion.

In this paper, we study the integrodifferential operator $(\mathcal{I}-\Delta)^{\log}$ corresponding
to the logarithmic symbol $\log(1 + |\xi|^2)$, which is a singular integral operator given by
\begin{equation}\label{intlog}
    (\mathcal{I}-\Delta)^{\log}u(x)
=c_{N}P.V.\int_{\mathbb{R}^{N}}\frac{u(x)-u(y)}{|x-y|^{N}}\kappa(|x-y|)dy,
\end{equation}
where $c_{N}=\pi^{-\frac{N}{2}}\Gamma(\frac{N}{2})$, $P.V.$ stands for the Cauchy
principal value of the integral, $\kappa(r)=2^{1-\frac{N}{2}}r^{\frac{N}{2}}\mathcal{K}_{\frac{N}{2}}(r)$
and $\mathcal{K}_{\nu}$ is the modified Bessel function of second
kind with index $\nu$. One can also easily deduce from \eqref{kmu} that $\kappa'(r)<0$ for $r>0$.
Using the expression \eqref{intlog}, one can define $(\mathcal{I}-\Delta)^{\log}$ for a quite  large class of functions $u$. To illustrate this, define the space $\mathcal{L}_{0}(\mathbb{R}^{N})$ as the space of locally integrable functions $u:\mathbb{R}^{N}\rightarrow\mathbb{R}$ such that 
$$\|u\|_{\mathcal{L}_{0}(\mathbb{R}^{N})}:= \int_{\mathbb{R}^{N}}\frac{|u(x)|e^{-|x|}}{(1+|x|)^{\frac{N+1}{2}}}dx<+\infty .$$
Then, it was proved by \cite[Proposition 2.1]{aml2} that for $u\in \mathcal{L}_{0}(\mathbb{R}^{N}) \cap L^\infty(\mathbb{R}^N)$ which is also Dini continuous at some $x\in \mathbb{R}^{N}$, the quantity $[(\mathcal{I}-\Delta)^{\log}u](x)$ is well defined by the formula \eqref{intlog}. Let us recall the definition of Dini continuity.
Let $U$ be a measurable subset of $\mathbb{R}^N$ and let $u: U\rightarrow \mathbb{R}$ be a measurable function. The modulus of continuity $\Psi_{u,x,U}: (0,+\infty)\rightarrow[0,+\infty)$ of $u$ at a point $x\in U$ is defined by
$$ \Psi_{u,x,U}(r):=\sup_{y\in U,|x-y|\leq r}|u(x)-u(y)|.$$
We call the function $u$  Dini continuous at $x$ if 
$$\int_{0}^{1}\frac{\Psi_{u,x,U}(r)}{r}dr<\infty.$$

Using the generalized direct method of moving planes, in this note we obtain the radial symmetry and monotonicity of nonnegative solutions for the nonlinear equations involving the logarithmic Schr$\ddot{\text{o}}$dinger operator (see Theorem \ref{th1}), namely,  we consider the nonlinear Schr$\ddot{\text{o}}$dinger equation
\begin{equation}\label{c1}
(\mathcal{I}-\Delta)^{\log}u(x)+mu(x)=u^{p}(x),\quad x\in \mathbb{R}^{N},
\end{equation}
with $m>0$ and $u(x)\geq0$ for all $x\in \mathbb{R}^{N}$.

The following result present symmetry and monotonicity properties of  the Schr$\ddot{\text{o}}$dinger equation \eqref{c1}.
\begin{theorem}\label{th1}
Let $u\in \mathcal{L}_{0}(\mathbb{R}^{N})$ be a nonnegative Dini continuous solution of \eqref{c1} with $m>0$ and $1<p<\infty$. If 
\begin{equation}\label{c2}
\lim_{|x|\rightarrow\infty}u(x)=a<\left(\frac{m}{p}\right)^{\frac{1}{p-1}},
\end{equation}
then $u$ must be radially symmetric and monotone decreasing about some point in $\mathbb{R}^{N}$.
\end{theorem}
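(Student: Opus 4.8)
The plan is to run the direct method of moving planes in an arbitrary unit direction; since the operator $(\mathcal{I}-\Delta)^{\log}$ has a radial kernel and the remaining terms $mu$ and $u^p$ are pointwise, the equation \eqref{c1} is rotation invariant, so it suffices to treat $e_1=(1,0,\dots,0)$. For $\lambda\in\mathbb{R}$ set $T_\lambda=\{x\in\mathbb{R}^N:x_1=\lambda\}$, $\Sigma_\lambda=\{x:x_1<\lambda\}$, and for $x\in\Sigma_\lambda$ let $x^\lambda=(2\lambda-x_1,x_2,\dots,x_N)$ be its reflection across $T_\lambda$. Writing $u_\lambda(x)=u(x^\lambda)$ and $w_\lambda=u_\lambda-u$, the reflection is an isometry, so $w_\lambda$ is antisymmetric, $w_\lambda(x^\lambda)=-w_\lambda(x)$, and $w_\lambda$ inherits membership in $\mathcal{L}_{0}(\mathbb{R}^{N})\cap L^\infty$ and Dini continuity, so $(\mathcal{I}-\Delta)^{\log}w_\lambda$ is well defined pointwise by \cite[Proposition 2.1]{aml2}. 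Set $K(x,y)=\kappa(|x-y|)|x-y|^{-N}=:g(|x-y|)$; since $\kappa'<0$ and $r\mapsto r^{-N}$ is decreasing, $g$ is strictly decreasing, and the computation $|x-y^\lambda|>|x-y|$ for $x,y\in\Sigma_\lambda$ gives $K(x,y)>K(x,y^\lambda)>0$. Splitting the principal value integral over $\Sigma_\lambda$ and $\Sigma_\lambda^c$, substituting $y\mapsto y^\lambda$ on the latter and using the antisymmetry yields, for $x\in\Sigma_\lambda$,
\[
(\mathcal{I}-\Delta)^{\log}w_\lambda(x)=c_N\,\mathrm{P.V.}\!\int_{\Sigma_\lambda}\!\big\{[w_\lambda(x)-w_\lambda(y)]K(x,y)+[w_\lambda(x)+w_\lambda(y)]K(x,y^\lambda)\big\}\,dy.
\]
If $w_\lambda$ attains a negative minimum at an interior point $x^0\in\Sigma_\lambda$, the elementary inequality $[w_\lambda(x^0)-w_\lambda(y)]K(x^0,y)+[w_\lambda(x^0)+w_\lambda(y)]K(x^0,y^\lambda)\le 2w_\lambda(x^0)K(x^0,y^\lambda)$, valid because $K(x^0,y)\ge K(x^0,y^\lambda)$ and $w_\lambda(x^0)\le w_\lambda(y)$, gives
\begin{equation*}
(\mathcal{I}-\Delta)^{\log}w_\lambda(x^0)\le 2c_N\,w_\lambda(x^0)\int_{\Sigma_\lambda}K(x^0,y^\lambda)\,dy<0. \tag{$\star$}
\end{equation*}

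Next I would bring in the equation. Subtracting \eqref{c1} at $x$ from \eqref{c1} at $x^\lambda$ and applying the mean value theorem to $t\mapsto t^p$ gives, for $x\in\Sigma_\lambda$, the identity $(\mathcal{I}-\Delta)^{\log}w_\lambda(x)=\big(p\,\xi_\lambda^{p-1}(x)-m\big)w_\lambda(x)$, where $\xi_\lambda(x)$ lies between $u(x)$ and $u_\lambda(x)$. At a negative minimum $x^0$, combining this with ($\star$) and $w_\lambda(x^0)<0$ forces $p\,\xi_\lambda^{p-1}(x^0)-m>0$, hence $u(x^0)\ge\xi_\lambda(x^0)>(m/p)^{1/(p-1)}$. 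By the decay hypothesis \eqref{c2} the super-level set $\{u>(m/p)^{1/(p-1)}\}$ is bounded, so any such $x^0$ lies in a fixed compact set. Since $u(x)\to a$ and $u_\lambda(x)\to a$ as $|x|\to\infty$ in $\Sigma_\lambda$, we have $w_\lambda\to 0$ at infinity, so a negative infimum of $w_\lambda$ would be attained at a finite interior point — impossible once $\Sigma_\lambda$ avoids that compact set. Therefore there is $\lambda_0$ with $w_\lambda\ge0$ on $\Sigma_\lambda$ for all $\lambda\le\lambda_0$, which starts the procedure.

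Then I would push the plane to the right and set $\lambda^*=\sup\{\lambda:\ w_\mu\ge0\ \text{on}\ \Sigma_\mu\ \text{for all}\ \mu\le\lambda\}$. If $\lambda^*=+\infty$, then $u$ is nondecreasing in $x_1$ with equal limits $a$ at both ends of the axis, forcing $u\equiv a$; the equation then gives $a^{p-1}=m$ or $a=0$, the former excluded by \eqref{c2} and the latter yielding the (trivially radial) zero solution, so we may assume $\lambda^*<\infty$. At $\lambda=\lambda^*$, continuity gives $w_{\lambda^*}\ge0$ on $\Sigma_{\lambda^*}$. I would then invoke a strong maximum principle in antisymmetric form: if $w_{\lambda^*}(x^0)=0$ at some interior $x^0$, the displayed formula collapses (using $w_{\lambda^*}\ge0$) to
\[
(\mathcal{I}-\Delta)^{\log}w_{\lambda^*}(x^0)=c_N\int_{\Sigma_{\lambda^*}}w_{\lambda^*}(y)\,[K(x^0,y^{\lambda^*})-K(x^0,y)]\,dy\le0,
\]
with strict inequality unless $w_{\lambda^*}\equiv0$; but the equation makes the left-hand side equal to $(p\,\xi^{p-1}-m)\cdot 0=0$, forcing $w_{\lambda^*}\equiv0$. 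Thus either $w_{\lambda^*}\equiv0$, giving symmetry of $u$ about $T_{\lambda^*}$, or $w_{\lambda^*}>0$ throughout the interior of $\Sigma_{\lambda^*}$.

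In the second case I would contradict the maximality of $\lambda^*$ by showing $w_{\lambda^*+\epsilon}\ge0$ on $\Sigma_{\lambda^*+\epsilon}$ for small $\epsilon>0$. If not, pick $\epsilon_k\downarrow0$ and points $x^k$ where $w_{\lambda^*+\epsilon_k}$ attains a negative minimum (finite, by the decay argument above); the sign analysis confines $x^k$ to the fixed compact super-level set, so $x^k\to\bar x$ with $w_{\lambda^*}(\bar x)\le0$, whence $\bar x\in T_{\lambda^*}$. A narrow-region estimate on the thin slab $\{\lambda^*<x_1<\lambda^*+\epsilon_k\}$, where the favorable term $\int_{\Sigma}K(x,y^\lambda)\,dy$ in ($\star$) dominates, then contradicts ($\star$). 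Hence $w_{\lambda^*}\equiv0$ in every case, so $u$ is symmetric about $T_{\lambda^*}$ and, because $w_\mu\ge0$ on $\Sigma_\mu$ for $\mu\le\lambda^*$, monotone decreasing away from $T_{\lambda^*}$ in the $x_1$-direction. As $e_1$ was arbitrary, $u$ has a hyperplane of symmetry orthogonal to every direction; these hyperplanes share a common point $x_0$, and $u$ is radially symmetric and monotone decreasing about $x_0$. The main obstacle I expect is the continuation step across $\lambda^*$: making the narrow-region principle rigorous requires careful control of the singular kernel $\kappa(r)/r^N$ both near the diagonal and at infinity — the latter being exactly where the exponential decay of $\mathcal{K}_{N/2}$ in \eqref{kk} is used to tame the tail — together with ensuring throughout that $(\mathcal{I}-\Delta)^{\log}w_\lambda$ is well defined at the relevant minimum points via the Dini continuity of $u$.
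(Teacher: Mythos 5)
Your proposal follows the same overall strategy as the paper: the direct method of moving planes with the identical decomposition of the principal value integral over $\Sigma_\lambda$ and its reflection, the same pointwise inequality at a negative minimum coming from the monotonicity $\kappa'<0$, and the same mean-value linearization $u_\lambda^p-u^p=p\,\xi_\lambda^{p-1}\omega_\lambda$. Two of your local arguments are in fact cleaner than the paper's. For the starting position, the paper proves a quantitative ``decay at infinity'' lemma (Theorem \ref{le2}), bounding $\int_{\Sigma_\lambda}\kappa(|x^{o}-y^{\lambda}|)\,|x^{o}-y^{\lambda}|^{-N}dy$ from below via the Bessel asymptotics \eqref{kk}; you instead observe that a negative interior minimum forces $p\,\xi_\lambda^{p-1}(x^0)>m$, hence $u(x^0)>(m/p)^{1/(p-1)}$, which confines $x^0$ to the bounded super-level set --- same conclusion, less machinery, and you also treat the case $\lambda^*=+\infty$ explicitly, which the paper glosses over. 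Likewise your strong-maximum-principle step at a zero minimum (the integrand $w(y)\bigl[K(x^0,y^{\lambda^*})-K(x^0,y)\bigr]\le0$ forcing $w\equiv0$) is a self-contained antisymmetric argument, whereas the paper routes this through the general strong maximum principle of Lemma \ref{le}. The one place where your proposal is genuinely incomplete is exactly the place you flag: the continuation past $\lambda^*$. The paper closes this step by invoking a small-volume maximum principle quoted from the literature (Lemma \ref{small}, from Feulefack and Jarohs--Weth), applied to the narrow set $(\Sigma_{\lambda}\setminus\Sigma_{\lambda_{o}-\delta})\cap B_{R_o}(0)$ after using compactness of the set where $\omega_{\lambda_o}\ge c_o>0$ and continuity in $\lambda$ elsewhere. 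Your compactness argument ($x^k\to\bar x\in T_{\lambda^*}$) is a viable substitute, and the needed estimate does hold: since $\kappa(r)\to\Gamma(N/2)$ as $r\to0$, the reflected-kernel integral $\int_{\Sigma_\lambda}K(x,y^\lambda)\,dy$ blows up like $\log(1/d)$ as $d=\mathrm{dist}(x,T_\lambda)\to0$, which eventually beats the bounded coefficient $|m-p\,\xi_\lambda^{p-1}|$. But you must actually carry out this estimate, or cite the narrow-region/small-volume maximum principle for $(\mathcal{I}-\Delta)^{\log}$, for the proof to be complete; also your thin slab should be $\{\lambda^*-\delta<x_1<\lambda^*+\epsilon_k\}$ rather than $\{\lambda^*<x_1<\lambda^*+\epsilon_k\}$, since the minima $x^k$ lie in $\Sigma_{\lambda^*+\epsilon_k}$ and approach $T_{\lambda^*}$ from the left.
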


\begin{remark}
    The condition \eqref{c2} in Theorem \ref{th1} is necessary for applying the method of moving planes using the decay at infinity principle (Theorem \ref{le2}).
\end{remark}\label{re}

The paper is organized as follows: In Section 2, we prove some results for the logarithmic Schr$\ddot{\text{o}}$dinger operator. 
By the direct method of moving planes, we obtain the symmetry and monotonicity of nonnegative solutions for the nonlinear equations involving logarithmic Schr$\ddot{\text{o}}$dinger operator in Section 3.

\section{Key ingredients for the method of moving planes }
This section is devoted to developing basic and key results needed to apply the method of
moving planes for establishing the proof of our main result in the next section. We first present some basic notation and nomenclatures which will be beneficial for rest
of the paper.

Choose an arbitrary direction, say, the $x_{1}$-direction. For arbitrary $\lambda\in\mathbb{R}$, let
$$T_{\lambda}=\{x\in\mathbb{R}^{N}\mid x_{1}=\lambda\}$$
be the moving plane, and let
$$\Sigma_{\lambda}=\{x\in\mathbb{R}^{N}\mid x_{1}<\lambda\}$$
be the region to the left of the plane $T_{\lambda}$, and
$$x^{\lambda}=(2\lambda-x_{1},x_{2},\cdot\cdot\cdot,x_{N})$$
be the reflection of $x$ about the plane $T_{\lambda}$. 

By denoting $u(x^{\lambda}):=u_{\lambda}(x)$, we define
$$\omega_{\lambda}(x):=u_{\lambda}(x)-u(x),\quad x\in\Sigma_{\lambda},$$
to compare the values of $u(x)$ and $u_{\lambda}(x)$.

The following results on the strong maximum principle for the operator $(\mathcal{I}-\Delta)^{\log}$ can be deduced from   \cite[Theorem 1.1]{sjtw} (see also \cite{Fra2} and \cite[Theorem 6.1]{aml2}).

\begin{lemma} \label{le} (Strong maximum principle)
Let $\Omega\subset \mathbb{R}^{N}$ be a domain, and let $u\in \mathcal{L}_{0}(\mathbb{R}^{N})$ be a continuous function on 
$\bar{\Omega}$ satisfying
\begin{equation}\label{ccc}
\begin{cases}
\ (\mathcal{I}-\Delta)^{\log}u(x)\geq0 ,& \text{in}\ \Omega,\\
\ \qquad u(x)\geq0 ,& \text{in}\ \mathbb{R}^{N}\backslash \Omega,
\end{cases}
\end{equation}
then $u>0$ in $\Omega$ or $u=0$ a.e. in $\mathbb{R}^{N}$.

\end{lemma}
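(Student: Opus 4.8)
The plan is to reduce everything to the strict positivity of the kernel together with a sign analysis at an extremal (``touching'') point, in the standard style for nonlocal operators. Writing $J(z):=c_{N}\kappa(|z|)/|z|^{N}$ for the kernel, formulas \eqref{kmu}--\eqref{kk} show that $J(z)>0$ for every $z\neq0$, that $J$ has a nonintegrable singularity $J(z)\sim\Gamma(\tfrac{N}{2})c_{N}|z|^{-N}$ at the origin (so the principal value is genuinely needed), and that $J(z)\sim C|z|^{-\frac{N+1}{2}}e^{-|z|}$ as $|z|\to\infty$, which matches the weight defining $\mathcal{L}_{0}(\mathbb{R}^{N})$ and is exactly what makes $(\mathcal{I}-\Delta)^{\log}u(x)$ finite for $u\in\mathcal{L}_{0}(\mathbb{R}^{N})$ at points of Dini continuity. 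I would first establish the nonnegativity $u\geq0$ in $\mathbb{R}^{N}$ (a minimum principle), and then upgrade this to the stated dichotomy.

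For the minimum principle, suppose to the contrary that $u$ is negative somewhere in $\Omega$. Since $u\geq0$ on $\mathbb{R}^{N}\setminus\Omega$, the number $m_{0}:=\inf_{\mathbb{R}^{N}}u$ is negative and, where attained, it is attained in the open set $\Omega$. Assuming it is attained at an interior point $x_{0}\in\Omega$ (the attainment is the delicate point, discussed below), the fact that $x_{0}$ is a global minimiser gives
$$\frac{u(x_{0})-u(y)}{|x_{0}-y|^{N}}\,\kappa(|x_{0}-y|)\leq0\qquad\text{for a.e. }y\in\mathbb{R}^{N},$$
and this fixed sign lets the principal value collapse to an ordinary Lebesgue integral valued in $[-\infty,0]$, whence $(\mathcal{I}-\Delta)^{\log}u(x_{0})\leq0$. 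The hypothesis $(\mathcal{I}-\Delta)^{\log}u(x_{0})\geq0$ then forces this integral to vanish, and since $J>0$ we conclude $u(y)=u(x_{0})=m_{0}<0$ for a.e. $y$, contradicting $u\geq0$ on the positive-measure set $\mathbb{R}^{N}\setminus\Omega$. Hence $u\geq0$ in $\Omega$, and therefore in all of $\mathbb{R}^{N}$.

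For the strong part I now use that $u\geq0$ everywhere. If $u>0$ throughout $\Omega$ there is nothing to prove, so suppose some $x_{0}\in\Omega$ satisfies $u(x_{0})=0$; then $x_{0}$ is again a global minimiser, with value $0$. Substituting $u(x_{0})=0$,
$$(\mathcal{I}-\Delta)^{\log}u(x_{0})=-c_{N}\int_{\mathbb{R}^{N}}\frac{u(y)}{|x_{0}-y|^{N}}\,\kappa(|x_{0}-y|)\,dy\leq0,$$
the integral converging absolutely because Dini continuity at $x_{0}$ controls the numerator $u(y)=u(y)-u(x_{0})$ against the $|x_{0}-y|^{-N}$ singularity. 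Once more the hypothesis supplies the reverse inequality, so the integral is zero; as $u\geq0$ and $J>0$, this forces $u\equiv0$ a.e. in $\mathbb{R}^{N}$. Thus exactly one of the two alternatives holds, which is the assertion of the lemma.

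The one genuinely delicate issue is attainment of the negative infimum in the minimum-principle step when $\Omega$ is unbounded, which is precisely the situation arising in the moving-plane scheme with $\Omega=\Sigma_{\lambda}$. I would secure it either by invoking the decay available in the application, where the comparison function $\omega_{\lambda}$ tends to $0$ at infinity so that a strictly negative infimum is attained at a finite point, or, in general, by a translation/limiting argument along a minimising sequence $x_{k}$, using the exponential decay of $J$ and the $\mathcal{L}_{0}$-integrability of $u$ to pass to the limit in $(\mathcal{I}-\Delta)^{\log}u(x_{k})\geq0$ and recover the same sign contradiction. Beyond this compactness point the proof is purely the local sign analysis at a touching point, and it is the strict positivity of $J$ granted by \eqref{kmu}--\eqref{kk} that converts the vanishing of the integral into the rigidity statement ``$u$ is a.e. constant,'' closing both steps.
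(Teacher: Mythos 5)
The paper offers no proof of Lemma \ref{le} at all: it is imported from \cite[Theorem 1.1]{sjtw} (see also \cite[Theorem 6.1]{aml2}), so your self-contained touching-point argument is necessarily a different route, and it is worth comparing. Your kernel analysis is correct: with the standard small-argument asymptotic $\mathcal{K}_{\nu}(r)\sim\frac{\Gamma(\nu)}{2}\big(\frac{2}{r}\big)^{\nu}$ (note that the paper's \eqref{kk} contains a typo, writing $(\frac{r}{2})^{\nu}$ instead) one gets $\kappa(0^{+})=\Gamma(\frac{N}{2})$, hence $J(z)\sim c_{N}\Gamma(\frac{N}{2})|z|^{-N}$ at the origin, and the decay $J(z)\sim C|z|^{-\frac{N+1}{2}}e^{-|z|}$ at infinity matching the $\mathcal{L}_{0}$ weight, exactly as you claim. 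Both sign arguments are sound: at an interior global minimiser the integrand has a fixed sign, so the principal value collapses to a $[-\infty,0]$-valued Lebesgue integral, the hypothesis forces it to vanish, and strict positivity of $J$ converts vanishing into rigidity. Your second step in fact does not need the Dini continuity you invoke (which the lemma does not assume anyway): the fixed sign alone legitimises the evaluation. For bounded $\Omega$ this is a complete and more elementary proof than the citation, whose setting (weak supersolutions, lower semicontinuity) is more general than what the lemma states.

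The genuine gap is the one you flagged but did not close: attainment of the negative infimum when $\Omega$ is unbounded. Your first patch (decay of the solution at infinity) is a property of the application, not of the lemma — and in the paper the lemma \emph{is} applied with unbounded $\Omega=\Sigma_{\lambda_{0}}$, via Theorem \ref{le1}, whose extra hypothesis $\liminf_{|x|\rightarrow\infty}\omega_{\lambda}(x)\geq0$ is precisely what restores attainment; so in the paper's pipeline the compactness is supplied there, not inside Lemma \ref{le}. Your second patch, the translation/minimising-sequence argument, is only a sketch and cannot work in the stated generality: nothing gives local compactness of the translates of $u$, and the lemma as literally written is actually false without some hypothesis making the complement visible — take $\Omega=\mathbb{R}^{N}$ and $u\equiv-1\in\mathcal{L}_{0}(\mathbb{R}^{N})$, for which $(\mathcal{I}-\Delta)^{\log}u=0$ and the exterior condition is vacuous, yet neither alternative of the dichotomy holds. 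Note that your own contradiction step silently uses $|\mathbb{R}^{N}\setminus\Omega|>0$. So your proof is correct exactly where attainment holds (bounded $\Omega$, or with a condition at infinity such as the one in Theorem \ref{le1}); for the statement in full generality one must either add such hypotheses explicitly or retreat, as the authors do, to \cite[Theorem 1.1]{sjtw}.
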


Now, we will prove the following maximum principles for the logarithmic Schr$\ddot{\text{o}}$dinger operator.

\begin{theorem} \label{le1} (Maximum principle for antisymmetric functions)
Let $\Omega$ be a bounded domain in $\Sigma_{\lambda}$. Assume that $\omega_{\lambda}\in \mathcal{L}_{0}(\mathbb{R}^{N}) \cap L^\infty(\mathbb{R}^N)$ is Dini continuous on $\Omega$  and is lower semi-continuous on $\bar{\Omega}$. If
\begin{equation}\label{b1}
\begin{cases}
\ (\mathcal{I}-\Delta)^{\log}\omega_{\lambda}(x)\geq0 ,& in\ \Omega,\\
\ \qquad \omega_{\lambda}(x)\geq0 ,& in\ \Sigma_{\lambda}\backslash \Omega,\\
\ \quad \omega_{\lambda}(x^{\lambda})=-\omega(x), & in\  \Sigma_{\lambda},
\end{cases}
\end{equation}
then 
\begin{equation}\label{bb2}
\omega_{\lambda}\geq0\quad in\ \Omega.
\end{equation}
Furthermore, if $\omega_{\lambda}(x)=0$ at some point in $\Omega$, then we have 
\begin{equation}\label{b2}
\omega_{\lambda}=0,\ a.e.\ \text{in}\ \mathbb{R}^{N}.
\end{equation}
These conclusions hold for unbounded region $\Omega$ if we further assume that
$$\liminf_{|x|\rightarrow\infty}\omega_{\lambda}(x)\geq0.$$
\end{theorem}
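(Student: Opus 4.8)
The plan is to argue by contradiction, following the standard architecture of the method of moving planes but adapted to the nonlocal operator $(\mathcal{I}-\Delta)^{\log}$. First I would suppose that \eqref{bb2} fails, i.e. that $\omega_\lambda$ is negative somewhere in $\Omega$. Since $\bar\Omega$ is compact and $\omega_\lambda$ is lower semi-continuous on $\bar\Omega$, the infimum of $\omega_\lambda$ over $\bar\Omega$ is attained at some point $x^0$. On $\partial\Omega$ one has $\omega_\lambda\ge 0$: on $\partial\Omega\cap\Sigma_\lambda$ this is the hypothesis $\omega_\lambda\ge 0$ in $\Sigma_\lambda\setminus\Omega$, while on $\partial\Omega\cap T_\lambda$ the antisymmetry forces $\omega_\lambda=0$. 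Hence a negative minimum must sit at an interior point $x^0\in\Omega$, and moreover $\omega_\lambda(x^0)\le\omega_\lambda(y)$ for every $y\in\Sigma_\lambda$ (for $y\in\Omega$ by minimality, for $y\in\Sigma_\lambda\setminus\Omega$ because there $\omega_\lambda\ge 0>\omega_\lambda(x^0)$). In the unbounded case the extra hypothesis $\liminf_{|x|\to\infty}\omega_\lambda(x)\ge 0$ guarantees that a negative infimum cannot escape to infinity, so the same interior minimizer is produced.

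The decisive step is to evaluate the operator at $x^0$ after folding the defining integral across $T_\lambda$. Splitting $\mathbb{R}^N=\Sigma_\lambda\cup(\mathbb{R}^N\setminus\Sigma_\lambda)$ and using the change of variables $y\mapsto y^\lambda$ on the reflected half together with the antisymmetry $\omega_\lambda(y^\lambda)=-\omega_\lambda(y)$, I would rewrite
\begin{equation*}
(\mathcal{I}-\Delta)^{\log}\omega_\lambda(x^0)=c_N\,\mathrm{P.V.}\!\int_{\Sigma_\lambda}\Big[(\omega_\lambda(x^0)-\omega_\lambda(y))\,G(|x^0-y|)+(\omega_\lambda(x^0)+\omega_\lambda(y))\,G(|x^0-y^\lambda|)\Big]dy,
\end{equation*}
where $G(r):=\kappa(r)/r^N$. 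The two geometric facts I need are, first, that for $x^0,y\in\Sigma_\lambda$ one has $|x^0-y^\lambda|>|x^0-y|$ (an elementary computation giving $|x^0-y^\lambda|^2-|x^0-y|^2=4(\lambda-x^0_1)(\lambda-y_1)>0$), and second, that $G$ is strictly decreasing, which follows at once from $\kappa>0$, $\kappa'<0$ and the monotonicity of $r\mapsto r^{-N}$. Since $\omega_\lambda(x^0)-\omega_\lambda(y)\le 0$ and $G(|x^0-y|)\ge G(|x^0-y^\lambda|)>0$, I can bound the bracket above by replacing $G(|x^0-y|)$ with $G(|x^0-y^\lambda|)$ in the first term, which collapses the integrand to $2\omega_\lambda(x^0)\,G(|x^0-y^\lambda|)<0$.

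Integrating this pointwise bound yields $(\mathcal{I}-\Delta)^{\log}\omega_\lambda(x^0)\le 2c_N\omega_\lambda(x^0)\int_{\Sigma_\lambda}G(|x^0-y^\lambda|)\,dy<0$, where the integral is finite and positive because $|x^0-y^\lambda|\ge\lambda-x^0_1>0$ keeps the kernel away from its singularity while the exponential decay of $\kappa$ at infinity secures integrability. This contradicts $(\mathcal{I}-\Delta)^{\log}\omega_\lambda(x^0)\ge 0$ and proves \eqref{bb2}. For the rigidity statement \eqref{b2}, I would repeat the folding at a zero $x^0\in\Omega$ of $\omega_\lambda$; now $\omega_\lambda\ge 0$ throughout $\Sigma_\lambda$, so with $\omega_\lambda(x^0)=0$ the folded integrand becomes $\omega_\lambda(y)\big[G(|x^0-y^\lambda|)-G(|x^0-y|)\big]$, a nonpositive quantity whose bracket is strictly negative for every $y\in\Sigma_\lambda$. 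The hypothesis $(\mathcal{I}-\Delta)^{\log}\omega_\lambda(x^0)\ge 0$ then forces this integral to vanish, whence $\omega_\lambda(y)=0$ for a.e. $y\in\Sigma_\lambda$, and antisymmetry propagates this to $\omega_\lambda=0$ a.e. in $\mathbb{R}^N$.

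The point requiring the most care — and where I expect the real work to be — is the justification of the folding and of the pointwise bound at the level of the principal value: one must check that the singular first term $(\omega_\lambda(x^0)-\omega_\lambda(y))\,G(|x^0-y|)$ is genuinely integrable near $y=x^0$, which is exactly where the Dini continuity of $\omega_\lambda$ enters (it converts the $|x^0-y|^{-N}$ singularity of $G$ into the convergent integral $\int_0\Psi_{\omega_\lambda,x^0}(r)\,r^{-1}\,dr$), and that the inequality survives passage to the $\epsilon\to 0$ limit in the P.V. truncation. Everything else is sign-chasing once the monotonicity of $G$ and the reflection inequality are in hand.
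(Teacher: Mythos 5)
Your argument for \eqref{bb2} is correct and is essentially the paper's proof: locate a negative interior minimum $x^0$ via lower semi-continuity (or via the $\liminf$ condition in the unbounded case), fold the defining integral across $T_\lambda$ using the antisymmetry, exploit $|x^0-y|\leq|x^0-y^\lambda|$ together with the monotonicity of $r\mapsto\kappa(r)/r^N$ to collapse the integrand to $2\omega_\lambda(x^0)\kappa(|x^0-y^\lambda|)/|x^0-y^\lambda|^N<0$, and contradict \eqref{b1}. Your justification that the resulting integral is finite and strictly positive (the reflected kernel stays away from its singularity since $|x^0-y^\lambda|\geq\lambda-x^0_1>0$, and $\kappa$ decays exponentially) is a detail the paper leaves implicit.

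Where you genuinely diverge is the rigidity statement \eqref{b2}. The paper disposes of it in one line by invoking the strong maximum principle (Lemma \ref{le}); note, however, that Lemma \ref{le} as stated requires $\omega_\lambda\geq0$ on all of $\mathbb{R}^N\setminus\Omega$, which an antisymmetric function fails on the reflected half-space unless it already vanishes there, so what is really being used is an antisymmetric variant of that lemma. Your route avoids this entirely: at a zero $x^0$ of $\omega_\lambda$ the folded integrand reduces to $\omega_\lambda(y)\bigl[G(|x^0-y^\lambda|)-G(|x^0-y|)\bigr]$ with $G(r)=\kappa(r)/r^N$, which is pointwise nonpositive with a strictly negative bracket, so the differential inequality forces $\omega_\lambda=0$ a.e.\ in $\Sigma_\lambda$ and antisymmetry finishes. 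This is more self-contained and arguably cleaner than the paper's citation; the paper's approach, in exchange, delegates the measure-theoretic bookkeeping to the quoted result. Both are valid, and your closing remark correctly identifies Dini continuity as what makes the principal value and the limit interchange legitimate.
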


\begin{proof}
If $\omega_{\lambda}$ is not nonnegative on $\Omega$, then the lower semi-continuity of $\omega_{\lambda}$ on $\bar{\Omega}$ implies that there exists a $x^{o}\in\bar{\Omega}$ such that
$$\omega_{\lambda}(x^{o}):=\min_{\bar{\Omega}}\omega_{\lambda}(x)<0.$$

One can further deduce from \eqref{b1} that $x^{o}$ is in the interior of $\Omega$. It follows that
\begin{equation} \label{ron4}
\begin{aligned}
(\mathcal{I}-\Delta)^{\log}\omega_{\lambda}(x^{o})
=&c_{N}P.V.\int_{\mathbb{R}^{N}}\frac{\omega_{\lambda}(x^{o})-\omega_{\lambda}(y)}{|x^{o}-y|^{N}}\kappa(|x^{o}-y|)dy\\
=&c_{N}P.V.\Bigg(\int_{\Sigma_{\lambda}}\frac{\omega_{\lambda}(x^{o})-\omega_{\lambda}(y)}{|x^{o}-y|^{N}}\kappa(|x^{o}-y|)dy\\&\quad\quad\quad\quad+\int_{ \Sigma_{\lambda}}\frac{\omega_{\lambda}(x^{o})-\omega_{\lambda}(y^{\lambda})}{|x^{o}-y^{\lambda}|^{N}}\kappa(|x^{o}-y^{\lambda}|)dy\Bigg).
\end{aligned}
\end{equation}
Since $|x^{o}-y|\leq|x^{o}-y^{\lambda}|$ we have $\frac{1}{|x^{o}-y|}\geq\frac{1}{|x^{o}-y^{\lambda}|}$ and $\kappa(|x^{o}-y|)\geq\kappa(|x^{o}-y^{\lambda}|)$ as $\kappa$ is a decreasing function, and, therefore, 
$$\frac{\omega_{\lambda}(x^{o})-\omega_{\lambda}(y)}{|x^{o}-y|^{N}}\kappa(|x^{o}-y|)\leq\frac{\omega_{\lambda}(x^{o})-\omega_{\lambda}(y)}{|x^{o}-y^{\lambda}|^{N}}\kappa(|x^{o}-y^{\lambda}|),$$
since $\omega_{\lambda}(x^{o})-\omega_{\lambda}(y)\leq0$.

Thus, we obtain from \eqref{ron4} that
\begin{equation}\label{b3}
\begin{aligned}
(\mathcal{I}-\Delta)^{\log}\omega_{\lambda}(x^{o})
\leq&c_{N}P. V.\int_{\Sigma_{\lambda}}\bigg(\frac{\omega_{\lambda}(x^{o})-\omega_{\lambda}(y)}{|x^{o}-y^{\lambda}|^{N}}
+\frac{\omega_{\lambda}(x^{o})+\omega_{\lambda}(y)}{|x^{o}-y^{\lambda}|^{N}} \bigg)\kappa(|x^{o}-y^{\lambda}|)dy\\
=&c_{N}P. V.\int_{\Sigma_{\lambda}}\frac{2\omega_{\lambda}(x^{o})}{|x^{o}-y^{\lambda}|^{N}}\kappa(|x^{o}-y^{\lambda}|)dy
<0, \end{aligned}
\end{equation}
which contradicts \eqref{b1}. Therefore, our assumption is wrong and consequently, we have   $\omega_{\lambda}(x)\geq0$ in $\Omega$.

Now we have proved that $\omega_{\lambda}(x)\geq0$ in $\Omega$. If there is some point $\tilde{x}\in\Omega$ such that $\omega_{\lambda}(\tilde{x})=0$, then from  Lemma \ref{le}, we derive immediately $\omega_{\lambda}=0$ a.e. in $\mathbb{R}^{N}$.

For unbounded domain $\Omega$, the condition
$$\liminf_{|x|\rightarrow\infty}\omega_{\lambda}(x)\geq0,$$
ensures that the negative minimum of $\omega_{\lambda}$ must be attained at some point
$x^{o}$, then we can derive the same contradiction as above.

This completes the proof of Theorem \ref{le1}.
\end{proof}
The following decay at infinity will also be necessary for proving subsequent
results.
\begin{theorem} \label{le2} (Decay at infinity)
Let $\Omega$ be an unbounded domain in $\Sigma_{\lambda}$. Suppose  that a Dini continuous $\omega_{\lambda}\in \mathcal{L}_{0}(\mathbb{R}^{N}) \cap L^\infty(\mathbb{R}^N)$ is a solution to
\begin{equation}\label{b4}
\begin{cases}
\ (\mathcal{I}-\Delta)^{\log}\omega_{\lambda}(x)+c(x)\omega_{\lambda}(x)\geq0 ,& in\ \Omega,\\
\ \qquad \omega_{\lambda}(x)\geq0 ,& in\ \Sigma_{\lambda}\backslash \Omega,\\
\ \quad \omega_{\lambda}(x^{\lambda})=-\omega(x), & in\  \Sigma_{\lambda},
\end{cases}
\end{equation}
with the measurable function $c(x)$ such that
\begin{equation}\label{b5}
\liminf_{|x|\rightarrow\infty}|x|^{\frac{1+N}{2}}c(x)\geq0.
\end{equation}
Then there exists a constant $R_{o}>0$, such that if
\begin{equation}\label{b6}
\omega_{\lambda}(x^{o})=\min_{\Omega}\omega_{\lambda}(x)<0,
\end{equation}
then
\begin{equation}\label{b7}
|x^{o}|\leq R_{o}.
\end{equation}

\end{theorem}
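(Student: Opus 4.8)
The plan is to evaluate $(\mathcal{I}-\Delta)^{\log}\omega_{\lambda}$ at the negative minimum $x^{o}$ exactly as in the proof of Theorem~\ref{le1}, feed the resulting strictly negative upper bound into the differential inequality \eqref{b4}, and then convert the sign condition \eqref{b5} on $c$ into an a priori bound on $|x^{o}|$. First I would extract the structural information hidden in \eqref{b4} and \eqref{b6}: since $\omega_{\lambda}\geq 0$ on $\Sigma_{\lambda}\setminus\Omega$ while $\omega_{\lambda}(x^{o})<0$, the minimizer $x^{o}$ must lie in the interior of $\Omega$, and in fact $\omega_{\lambda}(x^{o})=\min_{\Sigma_{\lambda}}\omega_{\lambda}$, so that $\omega_{\lambda}(x^{o})-\omega_{\lambda}(y)\leq 0$ for every $y\in\Sigma_{\lambda}$. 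Repeating the computation leading to \eqref{b3} — splitting the principal value over $\Sigma_{\lambda}$ and its reflection, using the antisymmetry $\omega_{\lambda}(y^{\lambda})=-\omega_{\lambda}(y)$, the inequality $|x^{o}-y|\leq|x^{o}-y^{\lambda}|$, and the monotonicity $\kappa'<0$ recorded in \eqref{kmu} — gives the clean bound
\[
(\mathcal{I}-\Delta)^{\log}\omega_{\lambda}(x^{o})\leq 2c_{N}\,\omega_{\lambda}(x^{o})\int_{\Sigma_{\lambda}}\frac{\kappa(|x^{o}-y^{\lambda}|)}{|x^{o}-y^{\lambda}|^{N}}\,dy=:2c_{N}\,\omega_{\lambda}(x^{o})\,I(x^{o}),
\]
with $I(x^{o})>0$. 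Since \eqref{b4} gives $(\mathcal{I}-\Delta)^{\log}\omega_{\lambda}(x^{o})\geq -c(x^{o})\omega_{\lambda}(x^{o})$, dividing by $\omega_{\lambda}(x^{o})<0$ yields the pointwise inequality $c(x^{o})\leq -2c_{N}I(x^{o})<0$.

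The crux is then a quantitative lower bound for $I(x^{o})$. Substituting $z=y^{\lambda}$ turns $I(x^{o})$ into $\int_{\{z_{1}>\lambda\}}\kappa(|x^{o}-z|)|x^{o}-z|^{-N}\,dz$, and I would discard all of the domain except a single ball $B_{d}\big((x^{o})^{\lambda}\big)\subset\{z_{1}>\lambda\}$ of radius $d:=\lambda-x^{o}_{1}=\operatorname{dist}(x^{o},T_{\lambda})$ centred at the reflected point, which is tangent to $T_{\lambda}$ by construction. There $|x^{o}-z|\leq 3d$, so the monotonicity of $\kappa$ together with $|B_{d}|\sim d^{N}$ gives $I(x^{o})\geq C_{N}\,\kappa(3d)>0$. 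Because $d\leq|x^{o}|+|\lambda|$ and $\kappa$ is decreasing, this can be rewritten as a strictly negative, explicitly controlled upper bound for $c(x^{o})$ in terms of $|x^{o}|$, namely $c(x^{o})\leq -2c_{N}C_{N}\kappa\big(3(|x^{o}|+|\lambda|)\big)$.

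Combining this with the behaviour of $c$ at infinity then forces $|x^{o}|\leq R_{o}$: once $|x^{o}|$ (equivalently $d$) is large, the inequality above pins $c(x^{o})$ strictly below the nonnegative asymptotic level permitted by \eqref{b5}, which is impossible, and the threshold at which the two become incompatible defines $R_{o}$. I expect the decisive difficulty to be precisely this last reconciliation. Unlike the fractional Laplacian, where the analogous weight decays only polynomially and produces a lower bound of order $|x^{o}|^{-2s}$, here the Bessel weight $\kappa$ decays exponentially by \eqref{kk}, so the bound $\kappa(3d)$ degenerates very fast in $d$; making the passage from $c(x^{o})\leq -2c_{N}I(x^{o})$ to a genuine, $\lambda$-uniform radius $R_{o}$ therefore hinges on localizing the kernel estimate correctly and on using the decay hypothesis \eqref{b5} in its sharp quantitative form.
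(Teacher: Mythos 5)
Your route is the same as the paper's: place the negative minimum $x^{o}$ in the interior of $\Omega$ so that $\omega_{\lambda}(x^{o})=\min_{\Sigma_{\lambda}}\omega_{\lambda}<0$, split the principal value over $\Sigma_{\lambda}$ and its reflection, use the antisymmetry together with $|x^{o}-y|\le|x^{o}-y^{\lambda}|$ and $\kappa'<0$ to get $(\mathcal{I}-\Delta)^{\log}\omega_{\lambda}(x^{o})\le 2c_{N}\omega_{\lambda}(x^{o})I(x^{o})$, and then bound $I(x^{o})$ from below by restricting to a ball in the reflected half--space. The only real divergence is the choice of that ball: you use the tangent ball $B_{d}\big((x^{o})^{\lambda}\big)$ with $d=\operatorname{dist}(x^{o},T_{\lambda})$, giving $I(x^{o})\ge C_{N}\kappa(3d)$, while the paper uses $B_{|x^{o}|}(\breve{x})$ with $\breve{x}=(3|x^{o}|+x_{1}^{o},(x^{o})')$, which yields $I(x^{o})\ge C|x^{o}|^{-\frac{N+1}{2}}e^{-4|x^{o}|}$, a bound depending on $|x^{o}|$ alone. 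Your conversion $d\le|x^{o}|+|\lambda|$ leaves a $|\lambda|$ in the final estimate; this matters because Step 1 of the proof of Theorem \ref{th1} needs $R_{o}$ independent of $\lambda$, and the paper's ball is engineered precisely to deliver that uniformity. This is fixable but should not be glossed over.

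The genuine gap is the one you flag yourself and then do not close: the passage from $c(x^{o})\le -2c_{N}I(x^{o})$ to $|x^{o}|\le R_{o}$. Because $\kappa$ decays exponentially by \eqref{kk}, your lower bound for $I(x^{o})$ (and the paper's) tends to $0$ as $|x^{o}|\to\infty$, so the inequality $c(x^{o})\le -2c_{N}C_{N}\kappa\big(3(|x^{o}|+|\lambda|)\big)$ does not ``pin $c(x^{o})$ strictly below'' any fixed negative level --- the right-hand side itself converges to $0^{-}$. Moreover \eqref{b5} is a $\limsup$ over all $x$, so a negative value of $c$ at the single point $x^{o}$ can never contradict it. You should know that the paper's own proof ends with exactly the same one-line assertion (``if $|x^{o}|$ is sufficiently large, this would contradict \eqref{b5}'') and is open to the same objection; the contradiction genuinely closes only under a stronger hypothesis, e.g.\ $\liminf_{|x|\to\infty}c(x)>0$ (in which case no quantitative kernel estimate is needed at all, since $c(x^{o})<0$ already forces $|x^{o}|\le R_{o}$), or a weighted condition of Chen--Li--Li type matched to the kernel, such as $\liminf_{|x|\to\infty}|x|^{\frac{N+1}{2}}e^{4|x|}c(x)\ge 0$, which the paper's $\lambda$-free estimate would then contradict. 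In the intended application $c(x)=m-pu^{p-1}(x)\to m-pa^{p-1}>0$ by \eqref{c2}, so the stronger hypothesis holds and the lemma is used legitimately; but as a proof of Theorem \ref{le2} under the literal hypothesis \eqref{b5}, your argument is incomplete at exactly this last step.
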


\begin{proof}
We prove the assertion by contradiction. Suppose that \eqref{b7} is false, then by \eqref{b4} and \eqref{b6}, we have
$$\omega_{\lambda}(x^{o})=\min_{\Sigma_{\lambda}}\omega_{\lambda}(x)<0.$$

After a direct calculation, we obtain
\begin{equation}\label{b8}
\begin{aligned}
(\mathcal{I}-\Delta)^{\log}\omega_{\lambda}(x^{o})
=&c_{N}P.V.\int_{\mathbb{R}^{N}}\frac{\omega_{\lambda}(x^{o})-\omega_{\lambda}(y)}{|x^{o}-y|^{N}}\kappa(|x^{o}-y|)dy\\
=&c_{N}P.V.\int_{\Sigma_{\lambda}}\bigg(\frac{\omega_{\lambda}(x^{o})-\omega_{\lambda}(y)}{|x^{o}-y|^{N}}\kappa(|x^{o}-y|)
+\frac{\omega_{\lambda}(x^{o})-\omega_{\lambda}(y^{\lambda})}{|x^{o}-y^{\lambda}|^{N}}\kappa(|x^{o}-y^{\lambda}|)\bigg)dy\\
\leq&c_{N}P.V.\int_{\Sigma_{\lambda}}\bigg(\frac{\omega_{\lambda}(x^{o})-\omega_{\lambda}(y)}{|x^{o}-y^{\lambda}|^{N}}
+\frac{\omega_{\lambda}(x^{o})+\omega_{\lambda}(y)}{|x^{o}-y^{\lambda}|^{N}} \bigg)\kappa(|x^{o}-y^{\lambda}|)dy\\
=&c_{N} P.V. \int_{\Sigma_{\lambda}}\frac{2\omega_{\lambda}(x^{o})}{|x^{o}-y^{\lambda}|^{N}}\kappa(|x^{o}-y^{\lambda}|)dy
<0.
\end{aligned}
\end{equation}

Now, we fix $\lambda$ and when $|x^{o}|\geq\lambda$, we have $B_{|x^{o}|}(\breve{x})\subset\tilde{\Sigma}_{\lambda}:=\mathbb{R}^{N}\backslash\Sigma_{\lambda}$ with $\breve{x}=(3|x^{o}|+x_{1}^{o},(x^{o})')$. 
Then, for $y\in\tilde{\Sigma}_{\lambda}$, if $|x^{o}|\geq\frac{R_{\infty}}{4}$ with sufficiently large $R_\infty$, we can deduce that $|x^{o}-y| \leq |x^{o}-\breve{x}|+|\breve{x}-y|\leq |x^{o}|+3|x^{o}|=|4x^{o}|$  which together with the fact that $\kappa$ is a decreasing  function implies that
$$\frac{\kappa(|x^{o}-y|)}{|x^{o}-y|} \geq \frac{\kappa(|4x^{o}|)}{|4x^{o}|}.$$

Thus, from \eqref{kk} and $\kappa(r)=2^{1-\frac{N}{2}}r^{\frac{N}{2}}\mathcal{K}_{\frac{N}{2}}(r)$, if  $R_\infty$  is sufficiently large, we have
\begin{equation}\label{b9}
\begin{aligned}
\int_{\Sigma_{\lambda}}\frac{1}{|x^{o}-y^{\lambda}|^{N}}\kappa(|x^{o}-y^{\lambda}|)dy
&=\int_{\tilde{\Sigma}_{\lambda}}\frac{\kappa(|x^{o}-y|)}{|x^{o}-y|^{N}}dy
\geq\int_{B_{|x^{o}|}(\breve{x})}\frac{\kappa(|4x^{o}|)}{|4x^{o}|^{N}}dy\\
&\geq\int_{B_{|x^{o}|}(\breve{x})}\frac{2^{1-\frac{N}{2}}\mathcal{K}_{\frac{N}{2}}(|4x^{o}|)}{|4x^{o}|^{\frac{N}{2}}}dy\\
&\geq\frac{c_{\infty}\omega_{N} }{2^{\frac{3N}{2}}|x^{o}|^{\frac{1+N}{2}}e^{4|x^{o}|}}
:=\frac{C}{|x^{o}|^{\frac{1+N}{2}}e^{4|x^{o}|}},
\end{aligned}
\end{equation}
where $C=c_{\infty}\omega_{N}2^{-\frac{3N}{2}}$ is a positive constant.

It follows that
$$0\leq(\mathcal{I}-\Delta)^{\log}\omega_{\lambda}(x^{o})+c(x^{o})\omega_{\lambda}(x^{o})\leq\bigg(\frac{C }{|x^{o}|^{\frac{1+N}{2}}e^{4|x^{o}|}}+ c(x^{o}) \bigg)\omega_{\lambda}(x^{o}),$$
or equivalently,
$$\frac{C }{|x^{o}|^{\frac{1+N}{2}}e^{4|x^{o}|}}+ c(x^{o}) \leq0.$$
Now,  if $|x^{o}|$ is sufficiently large, this would contradict \eqref{b5}. Therefore, \eqref{b7} holds. 

This completes the proof of Theorem \ref{le2}.
\end{proof}


    

\section{Proof of the main theorem}

\begin{proof}[Proof of Theorem \ref{th1}]
Let $T_{\lambda},\Sigma_{\lambda},x^{\lambda}$ and $\omega_{\lambda}$ be defined as in the previous section. Then at the points where $\omega_{\lambda}(x)<0$, it is easy to verify that, for $\xi_{\lambda}(x)\in(u_{\lambda}(x), u(x)),$ we have
\begin{equation}\label{c3}
\begin{aligned}
(\mathcal{I}-\Delta)^{\log}\omega_{\lambda}(x)+m\omega_{\lambda}(x)=u_{\lambda}^{p}(x)-u^{p}(x)=p\xi_{\lambda}^{p-1}(x)\omega_{\lambda}(x)\geq p u^{p-1}(x)\omega_{\lambda}(x),
\end{aligned}
\end{equation}
because $\omega_{\lambda}(x)<0$, and $\xi_{\lambda}(x)< u(x)$.

$\mathbf{Step\ 1}$. We will show that for sufficiently negative $\lambda$,
\begin{equation}\label{c4}
\omega_{\lambda}(x)\geq0,\quad x\in\Sigma_{\lambda}.
\end{equation}

First, from the assumption \eqref{c2}, for each fixed $\lambda$,
$\lim_{|x|\rightarrow\infty}\omega_{\lambda}(x)=0.$
In fact, by \eqref{c2}, we have $\lim_{|x|\rightarrow\infty}u(x)=a$, and $\lim_{|x|\rightarrow\infty}u_{\lambda}(x)=a$ implying that $\lim_{|x|\rightarrow\infty}\omega_{\lambda}(x)=0.$

Thus, if \eqref{c4} is false, then the negative minimum of $\omega_{\lambda}$ can be obtained at some point, say, $x^o$ in $\Sigma_{\lambda}$, that is, 
$$\omega_{\lambda}(x^{o})=\min_{\Sigma_{\lambda}}\omega_{\lambda}(x)<0.$$

Set $c(x):=m-pu^{p-1}(x)$ in \eqref{c3} and then, the assumption \eqref{c2} implies that $c \in L^\infty(\mathbb{R}^N)$ and 
$$\lim_{|x|\rightarrow\infty}c(x)\geq0.$$
Consequently, from Theorem \ref{le2} it follows that there exists $R_{o}>0$ (independent of $\lambda$), such that
\begin{equation}\label{c5}
|x^{o}|\leq R_{o}.
\end{equation}
Therefore, by choosing $\lambda<-R_{o}$ and consequently, $|x^\lambda|>R_0$ for $x \in \Sigma_{\lambda},$  we obtain  by \eqref{c5} that
\begin{equation}\label{cc5}
\omega_{\lambda}(x)\geq0,\quad x\in\Sigma_{\lambda}.
\end{equation}
$\mathbf{Step\ 2}$. Step 1 provides a starting point, from which we can now move the plane $T_{\lambda}$ to the right as long as 
\eqref{c4} holds to its limiting position.
Define
$$\lambda_{o}:=\sup\{\lambda\mid \omega_{\mu}(x)\geq0,\ \forall \ x\in\Sigma_{\mu},\ \forall \mu\leq\lambda\}.$$
By \eqref{c5}, we know $\lambda_{o}<\infty$.

Next, we will show via a contradiction argument that
\begin{equation}\label{c6}
\omega_{\lambda_{o}}(x)\equiv0,\quad \forall \ x\in\Sigma_{\lambda_{o}}.
\end{equation}
Suppose, on the contrary, that
\begin{equation}\label{c7}
\omega_{\lambda_{o}}(x)\geq0,\ \text{and}\ \ \omega_{\lambda_{o}}(x)\not\equiv0, \ \text{in}\ \Sigma_{\lambda_{o}},
\end{equation}
then we must have
\begin{equation}\label{c8}
\omega_{\lambda_{o}}(x)>0,\quad \forall \ x\in\Sigma_{\lambda_{o}}.
\end{equation}

In fact, if \eqref{c8} is violated, then there exists a point $\hat{x}\in\Sigma_{\lambda_{o}}$ such that
$$\omega_{\lambda_{o}}(\hat{x})=\min_{\Sigma_{\lambda_{o}}}\omega_{\lambda_{o}}(x)=0.$$
It means that $u_{\lambda_{o}}(\hat{x})=u(\hat{x})$.
Then it follows from \eqref{c3} that
$$(\mathcal{I}-\Delta)^{\log}\omega_{\lambda_{o}}(\hat{x})=u_{\lambda_{o}}^{p}(\hat{x})-u^{p}(\hat{x})=u^{p}(\hat{x})-u^{p}(\hat{x})=0.$$
Hence, Theorem \ref{le1} implies that $\omega_{\lambda_{o}}(\hat{x})\equiv0$ in $\Sigma_{\lambda_{o}}$, which contradicts
\eqref{c7}. Thus \eqref{c8} holds.

Now, we will show that the plane $T_{\lambda}$ can be moved further right. More precisely, there exists an $\epsilon>0$ such that, for any $\lambda\in[\lambda_{o},\lambda_{o}+\epsilon)$, we have 
\begin{equation}\label{c9}
\omega_{\lambda}(x)\geq0,\quad  x\in\Sigma_{\lambda}.
\end{equation}
Once it is proved this will  contradict the definition of $\lambda_{o}$. Therefore, \eqref{c6} must be valid.

Let us now prove \eqref{c9}. In fact, by \eqref{c8}, we have $\omega_{\lambda_{o}}(x)>0,\,\,\,  x\in\Sigma_{\lambda_{o}},$
which in turn implies that there is a constant $c_{o}>0$ and $\delta>0$ such that
$$\omega_{\lambda_{o}}(x)\geq c_{o}>0,\quad x\in\overline{\Sigma_{\lambda_{o}-\delta}\cap B_{R_{o}}(0)}.$$

Since $\omega_{\lambda}$ is continuous with respect to $\lambda$ there exists an $\epsilon>0$ such that for $\lambda\in[\lambda_{o},\lambda_{o}+\epsilon)$ we have
\begin{equation}\label{c10}
\omega_{\lambda}(x)\geq0,\quad  x\in\Sigma_{\lambda_{o}-\delta}\cap B_{R_{o}}(0).
\end{equation}
Moreover, combining \eqref{c5} with \eqref{c10}, we deduce that $w_\lambda(x) \geq 0$ on $\Sigma_{\lambda_0-\delta}.$ 

To proceed with the proof, we need the following small volume maximum principle (\cite[Theorem 6.1 (iii)]{aml2} and \cite[Theorem 1.3]{sjtw}).

\begin{lemma} \label{small} Let $\Omega$ be a open set of $\mathbb{R}^N.$ Consider the following problem on $\Omega$:
\begin{equation} \label{r610}
\begin{cases}
(\mathcal{I}-\Delta)^{\log}u(x)\geq c(x)u,& in\ \Omega,\\
\qquad\qquad u\geq0, & in\ \mathbb{R}^{N}\backslash\Omega,
\end{cases}
\end{equation}
with $ c\in L^{\infty}(\mathbb{R}^{N})$.
\\
Then, there exists $\delta>0$ such that for every open  set $\Omega \subset \mathbb{R}^{N}$ with $|\Omega|\leq\delta$ and any solution $u\in \mathcal{V}_{\omega}(\Omega)$ of \eqref{r610} in $\Omega$,
where the space $\mathcal{V}_{\omega}(\Omega)$ is given in   \cite[Section 6]{aml2}, we have $u\geq0$ in $\mathbb{R}^{N}$.
\end{lemma}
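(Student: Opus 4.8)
The plan is to argue by contradiction, following the same template already used in the proofs of Theorems \ref{le1} and \ref{le2}, but now extracting a \emph{quantitative} lower bound on the nonlocal contribution coming from the complement of $\Omega$. Suppose the conclusion fails, so that $u$ is negative somewhere. Since $u\geq 0$ on $\mathbb{R}^N\setminus\Omega$, any negative value is attained inside $\Omega$, and the regularity encoded in the space $\mathcal{V}_{\omega}(\Omega)$ (together with the sign condition outside $\Omega$) should guarantee that the negative infimum is in fact attained at an interior point $x^o\in\Omega$ with $u(x^o)=\min_{\mathbb{R}^N}u<0$. At such a point I would split the operator into its contribution over $\Omega$ and over $D:=\mathbb{R}^N\setminus\Omega$,
\begin{equation*}
(\mathcal{I}-\Delta)^{\log}u(x^o)=c_{N}\,P.V.\int_{\Omega}\frac{u(x^o)-u(y)}{|x^o-y|^{N}}\kappa(|x^o-y|)\,dy+c_{N}\int_{D}\frac{u(x^o)-u(y)}{|x^o-y|^{N}}\kappa(|x^o-y|)\,dy.
\end{equation*}
Because $u(x^o)$ is the global minimum the integrand of the first term is nonpositive, so that term is $\leq 0$; and since $u\geq 0$ on $D$, the second term is bounded above by $c_{N}u(x^o)\int_{D}\frac{\kappa(|x^o-y|)}{|x^o-y|^{N}}\,dy$.

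The heart of the matter is to show that $\int_{D}\frac{\kappa(|x^o-y|)}{|x^o-y|^{N}}\,dy$ can be made arbitrarily large, uniformly in $\Omega$ and $x^o$, once $|\Omega|\leq\delta$ with $\delta$ small. Here the asymptotics \eqref{kk} are decisive: since $\mathcal{K}_{\frac{N}{2}}(r)\sim\frac{\Gamma(N/2)}{2}(r/2)^{-\frac{N}{2}}$ as $r\to0$, the kernel $\frac{\kappa(r)}{r^{N}}$ behaves like $\Gamma(N/2)\,r^{-N}$ near the origin, whose radial integral $\int\frac{\kappa(r)}{r}\,dr$ diverges only \emph{logarithmically}. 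Consequently a single ball estimate, of the type used in \eqref{b9}, produces merely a bounded lower bound and is \emph{not} sufficient; this is precisely where the logarithmic operator differs from the relativistic fractional one. Instead I would partition an annular region around $x^o$ into dyadic shells $A_{k}=\{2^{k}r_{\delta}\leq|x^o-y|<2^{k+1}r_{\delta}\}$, where $\omega_{N}r_{\delta}^{N}=\delta$. For each $k\geq 1$ one has $|A_{k}|=\delta\,2^{kN}(2^{N}-1)\geq 2|\Omega|$, hence $|A_{k}\cap D|\geq\tfrac12|A_{k}|$; using that $\frac{\kappa(r)}{r^{N}}$ is decreasing and that $\kappa$ stays above $\kappa(1)>0$ on $(0,1]$, every shell with $2^{k+1}r_{\delta}\leq 1$ contributes at least a fixed positive constant $c_{1}=c_{1}(N)$ to the integral. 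Summing the $\sim\log_{2}(1/r_{\delta})$ admissible shells then yields
\begin{equation*}
\int_{D}\frac{\kappa(|x^o-y|)}{|x^o-y|^{N}}\,dy\ \geq\ c_{1}\log\frac{1}{\delta}-c_{2},
\end{equation*}
for constants $c_{1},c_{2}>0$ depending only on $N$, which tends to $+\infty$ as $\delta\to 0$.

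With this in hand the contradiction is immediate. Combining the two bounds gives $(\mathcal{I}-\Delta)^{\log}u(x^o)\leq c_{N}u(x^o)\bigl(c_{1}\log\tfrac1\delta-c_{2}\bigr)$, while the differential inequality in \eqref{r610} forces $(\mathcal{I}-\Delta)^{\log}u(x^o)\geq c(x^o)u(x^o)$. Since $u(x^o)<0$, dividing by $u(x^o)$ reverses the inequality and yields $c(x^o)\geq c_{N}\bigl(c_{1}\log\tfrac1\delta-c_{2}\bigr)$; as $c\in L^{\infty}(\mathbb{R}^{N})$, choosing $\delta$ so small that $c_{N}\bigl(c_{1}\log\tfrac1\delta-c_{2}\bigr)>\|c\|_{L^{\infty}(\mathbb{R}^{N})}$ contradicts this bound. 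Hence $u\geq 0$ in $\Omega$, and together with $u\geq0$ on $\mathbb{R}^{N}\setminus\Omega$ we conclude $u\geq0$ in $\mathbb{R}^{N}$. I expect two delicate points. The first is justifying rigorously that the negative infimum is attained at an interior point and that the principal value is legitimate there; this is where the precise definition of $\mathcal{V}_{\omega}(\Omega)$ and the Dini continuity must be invoked, and where extra care is needed if $\Omega$ is unbounded. The second, and the main obstacle, is the uniform logarithmic lower bound above: because $\kappa(r)/r^{N}$ is only \emph{borderline} non-integrable at the origin, the estimate cannot be captured on a single ball and must be accumulated over all dyadic scales, which is the key structural feature distinguishing the small-volume principle for $(\mathcal{I}-\Delta)^{\log}$ from its fractional counterparts.
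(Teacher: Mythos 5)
The paper does not actually prove Lemma \ref{small}: it is quoted from \cite[Theorem 6.1 (iii)]{aml2} and \cite[Theorem 1.3]{sjtw}, where it is established in a variational framework (testing the weak formulation with the negative part of $u$ and using that the relevant coercivity constant of the quadratic form on $\mathcal{V}_{\omega}(\Omega)$ blows up as $|\Omega|\to0$). Your argument is therefore a genuinely different, pointwise route, in the spirit of the proofs of Theorems \ref{le1} and \ref{le2}. Its quantitative core is correct, and it is in fact the same mechanism that powers the cited variational proof: since $\kappa(r)/r^{N}\sim\Gamma(N/2)\,r^{-N}$ as $r\to0$ is non-integrable at the origin, the kernel mass seen from $x^{o}$ over the complement of a set of measure at most $\delta$ diverges like $\log(1/\delta)$. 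Your dyadic-shell computation is a clean way to extract this: each admissible shell $A_{k}$ satisfies $|A_{k}|\geq2|\Omega|$, hence $|A_{k}\cap D|\geq\tfrac12|A_{k}|$, and contributes a fixed constant $\kappa(1)\,\omega_{N}(2^{N}-1)2^{-N-1}$, so summing over the roughly $\tfrac1N\log_{2}(1/\delta)$ shells gives the claimed lower bound; your observation that a single-ball estimate in the style of \eqref{b9} cannot work for this borderline kernel is also correct. What your approach buys is a self-contained elementary proof with an explicit smallness threshold of order $e^{-C(1+\|c\|_{L^{\infty}})}$; what it costs is generality at exactly the point you flag: you need the negative infimum to be \emph{attained} at an interior point where the principal value converges, which is not automatic for a merely weak solution in $\mathcal{V}_{\omega}(\Omega)$, nor for an unbounded $\Omega$ of finite measure without a decay hypothesis at infinity. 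Closing that gap is precisely what the variational argument of \cite{sjtw} is designed to do. For the way the lemma is actually invoked in the paper --- $\Omega=(\Sigma_{\lambda}\setminus\Sigma_{\lambda_{o}-\delta})\cap B_{R_{o}}(0)$ bounded and $u=\omega_{\lambda}$ Dini continuous with $\omega_{\lambda}\to0$ at infinity --- your pointwise argument suffices.
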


Consequently,
according to Lemma \ref{small} (by taking $\Omega=(\Sigma_{\lambda}\setminus\Sigma_{\lambda_{o}-\delta}) \cap B_{R_o}(0)$), we obtain that
\eqref{c9} holds.

The arbitrariness of the $x_{1}$-direction leads to the radial symmetry of $u(x)$ about some point in $\mathbb{R}^{N}$, and the monotonicity is a consequence of the fact that \eqref{cc5} holds.

This completes the proof of the Theorem \ref{th1}.
\end{proof}

\section*{Acknowledgments}
RZ is supported by the Postdoctoral Fellowship Program of CPSF under Grant No GZC20232913.
 VK and MR are supported by the FWO Odysseus 1 grant G.0H94.18N: Analysis and Partial
Differential Equations, the Methusalem programme of the Ghent University Special Research Fund (BOF) (Grant number 01M01021) and by FWO Senior Research Grant G011522N. MR is also supported by EPSRC grant
EP/R003025/2.

\end{document}